\documentclass{amsart}
\usepackage{graphicx} 
\usepackage{graphicx} 
\usepackage[utf8]{inputenc}
\usepackage[english]{babel}
\usepackage{amsmath} 
\usepackage{amssymb}
\usepackage{amsthm}
\usepackage{mathtools}
\usepackage{graphicx}
\graphicspath{ {c:/This PC/Documents/images/} }
\usepackage{tikz-cd}
\usepackage{MnSymbol}
\usepackage{amsthm}
\usepackage{comment}

\usepackage[backend=biber, style=numeric, sorting=none]{biblatex}
\addbibresource{literatur.bib}
\AtBeginBibliography{\footnotesize}

\theoremstyle{plain}
\newtheorem{theorem}{Theorem}[section]

\newtheorem{lemma}[theorem]{Lemma}

\theoremstyle{definition}

\newtheorem*{remark}{Remark}

\setlength\parindent{0pt}

\usepackage[toc,page]{appendix}

\usepackage[top=30mm, left=35mm, right=35mm]{geometry}

\title{Commutators of n-cycles in the symmetric group}
\author[Philipp Bader]{Philipp Bader}
\address{University of Glasgow, UK}
\email{p.bader.1@research.gla.ac.uk} 

\keywords{Symmetric group, alternating group, commutator}
\subjclass[2020]{20B30, 20B35, 05A05}

\begin{document}

The result presented in this article can be derived from the work of Aleš Vavpetič in [1]. We would like to thank Goulnara Arzhantseva and Aleš Vavpetič for pointing out the reference as well as for the subsequent discussion and explanations.\\

In our main theorem, we want to show that every even permutation $\rho$ in $A_n$ can be written as a commutator of two $n$-cycles. As Aleš Vavpetič pointed out to us, this follows in most cases by putting together Corollary 2.5, Corollary 2.7 and Lemma 2.9 in [1]. The only cases not covered in this way are when $\rho$ contains $3$-cycles or pairs of $2$-cycles. However, these cases also follow by a short computation and an induction argument similar to the one used in above Corollaries.\\

Since the proof we present is not conceptually different from what is done in [1], this article is therefore not intended for publication.\\

\newpage

\begin{abstract}
We show that for $n \ge 6$ every even permutation on $n$ symbols is the commutator of two $n$-cycles. More precisely, let $S_n$ be the symmetric group and $A_n$ the alternating group. Let $C(n) \subset S_n$ denote the conjugacy class of $n$-cycles and $[\cdot, \cdot]$ be the commutator of two permutations. We prove: The map $C(n) \times C(n) \to A_n, \ (\tau, \pi) \mapsto [\tau, \pi]$ is surjective for all $n \ge 6$.
\end{abstract}

\maketitle
\section{Introduction}

The commutator subgroup of a group is defined as the subgroup generated by all commutators. A product of two commutators need not be a commutator itself. Hence, it makes sense to ask whether the commutator subgroup of a given group consists entirely of commutators.\\

In 1951, Ore proved that the above holds for the finite symmetric groups by showing that every element of the alternating group can be written as a commutator in the symmetric group (\cite{O}). Furthermore, Ore proceeded to prove that for $n \ge 5$ every element in the alternating group $A_n$ is already a commutator in the alternating group itself. This led to the so called Ore conjecture which states that every element in a finite, non-abelian, simple group is a commutator. Over the years the conjecture was established for more simple groups and finally completely proven in 2010 by Liebeck, O'Brien, Shalev and Tiep (\cite{LOST}).\\

In this note, we reinforce Ore's original result by showing that for $n \ge 6,$ every element in the alternating group $A_n$ can be written as a commutator of two $n$-cycles. In the case where $n$ is odd, $n$-cycles are elements of $A_n$ themselves.\\ 

We start by setting up some notation:\\

Let $S_n$ be the symmetric group on $n$ symbols. For $\pi \in S_n$ we say that $\pi$ has cycle type $(k_1, ... , k_m)$ if $\pi$ consists of $m$ cycles that have length $k_1, ... , k_m$ respectively. Note that $k_1 + ... + k_m = n.$ The conjugacy classes of $S_n$ are determined by cycle type, i.e. two permutations $\pi, \tau \in S_n$ are conjugate if and only if they have the same cycle type. For $k_1, ... , k_m$ with $k_1 + ... + k_m = n,$ we denote the corresponding conjugacy class by $C(k_1, ... , k_m) \subset S_n.$ We refer to elements in $C(n)$ as \textit{$n$-cycles}.\\

Denote by $[\tau, \pi] := \tau^{-1}\pi^{-1}\tau\pi$ the commutator of $\tau, \pi \in S_n.$ Every commutator is an even permutation and hence an element of $A_n.$ It is a classical result that the commutator subgroup of $S_n,$ i.e. the group generated by all commutators, is $A_n$ for all $n \ge 2.$\\

Here, for $n \ge 6$ we prove the stronger result that every element of $A_n$ is a commutator of two $n$-cycles.

\begin{theorem}\label{commutator theorem}
    The map $$C(n) \times C(n) \to A_n, \ (\tau, \pi) \mapsto [\tau, \pi]$$ is surjective for all $n \ge 6.$    
\end{theorem}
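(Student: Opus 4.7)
The plan is to reduce surjectivity to a statement about cycle types and then handle each type by explicit construction combined with an induction on $n$.

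For the reduction, observe that for any $\sigma \in S_n$,
$$\sigma^{-1}[\tau,\pi]\sigma = [\sigma^{-1}\tau\sigma,\sigma^{-1}\pi\sigma],$$
and $C(n)$ is closed under $S_n$-conjugation, so the image of the commutator map is a union of $S_n$-conjugacy classes in $A_n$. These classes are parameterised by cycle types; an $A_n$-conjugacy class is either equal to or half of an $S_n$-class, so it suffices to realise one element of each admissible cycle type (a partition of $n$ with an even number of even parts) as $[\tau,\pi]$ with $\tau,\pi \in C(n)$.

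My approach to the construction itself is to exploit $[\tau,\pi] = \tau^{-1}\cdot(\pi^{-1}\tau\pi)$. Setting this equal to a target $\rho$ forces $\pi^{-1}\tau\pi = \tau\rho$, which requires choosing $\tau \in C(n)$ such that (i) $\tau\rho \in C(n)$ and (ii) there exists an $n$-cycle $\pi$ conjugating $\tau$ to $\tau\rho$. Condition (ii) is delicate: the set of $S_n$-conjugators sending $\tau$ to $\tau\rho$ is a single coset of the cyclic centraliser $\langle\tau\rangle$, of size $n$, and we must exhibit an $n$-cycle within that coset. In practice I would build $\tau$ and $\pi$ in tandem rather than pick $\tau$ first and then hunt for $\pi$.

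I would then proceed by induction on $n$. Given $\rho \in A_n$ possessing a cycle of sufficient length (or a suitable pattern of fixed points), peel off part of the cycle structure to obtain $\rho' \in A_{n-k}$ of an admissible cycle type, apply the inductive hypothesis to write $\rho' = [\tau',\pi']$ with $\tau',\pi' \in C(n-k)$, and then splice the removed $k$ symbols into both $\tau'$ and $\pi'$ at carefully chosen positions. The splicing must be arranged so that each permutation becomes an $n$-cycle and the commutator acquires precisely the peeled-off piece. Verifying that a single splicing pattern achieves both conditions simultaneously is the combinatorial heart of the induction.

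The main obstacle, flagged in the introduction, lies with the cycle types in which $\rho$ consists entirely of $3$-cycles or entirely of disjoint pairs of $2$-cycles: removing a single cycle from such a $\rho$ does not yield a permutation of the same shape on fewer symbols, so the peeling reduction does not apply. I would treat these by exhibiting explicit commutator expressions for small base $n$ (the smallest values at which each such cycle type first appears in $A_n$ with $n\geq 6$) and by an ad hoc induction in steps of $3$ or $4$ that preserves the special shape throughout, so that the hypothesis always feeds back into a situation of the same kind.
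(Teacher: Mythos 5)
Your outline follows the same route as the paper: reduce surjectivity to realising one element of each even cycle type (the image is a union of $S_n$-classes), then build the required pairs by explicit base constructions plus an inductive splicing step. The reformulation $[\tau,\pi]=\rho \iff \pi^{-1}\tau\pi=\tau\rho$ and the observation that the conjugators form a coset of $\langle\tau\rangle$ are correct. But as written the proposal is a plan rather than a proof: every step that carries actual content is deferred, and two of the deferrals hide genuine difficulties. First, the set of unavoidable base cases is larger than you acknowledge. Because $C(3)$ and $C(2,2)$ are themselves not commutators of cycles, any admissible type that contains a single $3$, or exactly two $2$'s, alongside other parts --- for instance $C(5,3)$, $C(2k,2l,3)$, $C(n,2,2)$, $C(2k,2,2,2)$ --- cannot be assembled by peeling off smaller achievable pieces; each such family needs its own explicit construction, not just the ``all $3$'s'' and ``all $2$-pairs'' types you single out. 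Moreover even the generic building blocks (an odd cycle $C(m)$ for $m\ge 5$, a pair of even cycles $C(2k,2l)$ with $(k,l)\neq(1,1)$) each require exhibiting an $n$-cycle $\pi$ with $[\sigma_n,\pi]$ of the prescribed type, and these explicit permutations, with their case distinctions on residues mod $2$ and mod $4$, are the bulk of the paper's argument; none appear in your proposal.

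Second, the splicing step --- that one can insert $k$ new symbols into both $n$-cycles so that both remain single cycles and the commutator gains exactly the removed piece --- is exactly the combinatorial heart you name, and it does not follow from generalities. The paper makes it work only after normalising $\tau=\sigma_n$ and $\pi(1)=2$ and inserting the new symbols at the specific position just before the return to $1$ in the cycle notation of $\pi$; the verification that the commutator splits as a disjoint product is a short but nontrivial pointwise computation (Lemmas~\ref{adding 1} and~\ref{stitching together}). Without fixing such a normalisation and checking it, the induction does not close. So the strategy is sound and matches the paper's, but the proof is not yet there: you need to (i) enumerate the full list of irreducible types, (ii) supply explicit $\pi$'s for each, and (iii) prove the insertion lemma with a concrete splicing rule.
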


\begin{remark}
    For $n = 2$ Theorem \ref{commutator theorem} is also true since $A_2$ is the trivial group. For $n \in \{3, 4, 5\}$ it fails however. For $n = 3,$ we have that $C(3) = \{\sigma, \sigma^{-1}\}$ with $\sigma = (1 \ 2 \ 3)$ and so any commutator of $3$-cycles is the identity. On the other hand however, $\sigma, \sigma^{-1} \in A_3.$ For $n = 4$ and $n = 5$ one can check that $C(2,2) \subset A_4$ and $C(1, 2, 2) \subset A_5$ respectively are not in the image. 
\end{remark}

We point out that the theorem also follows from Jeffreys' work in \cite{LJ} although there it is not stated in this form. Jeffreys was studying so called square tiled surface which can be described by permutations. The author motivated by similar questions came across this purely group theoretical problem. Lemma \ref{stitching together} below can be found in a more geometrical language in (\cite{LJ}, Lemma 3.1) and was proved independently by the author. The result and proof presented here are purely group theoretical and don't use any geometry.

\section{Proof of the theorem}

Let $\sigma_n := (1 \ 2 \ ... \ n) \in C(n).$ We refer to the $\sigma_n$ as the \textit{shift permutation}. Let $k_1, ... , k_m \in \mathbb{N}$ with $k_1 + ... + k_m = n.$ The conjugacy class $C(k_1, ... , k_m)$ is a subset of $A_n$ if and only if the tuple $(k_1, ... , k_m)$ contains an even number of even numbers. We refer to such conjugacy classes as \textit{even conjugacy classes}.\\

We denote the commutator map by $f.$ Note that it suffices to show that one element of each even conjugacy class is in the image of $f.$ For if $\rho \in \text{image}(f)$ and $\rho'$ is any other element in the same conjugacy class as $\rho,$ then there exist $\tau, \pi \in C(n)$ and $\phi \in S_n$ with
$$f(\tau, \pi) = \rho \text{ and } \rho' = \phi \rho \phi^{-1},$$

and hence we have $\tau' := \phi \tau \phi^{-1}, \, \pi' := \phi \pi \phi^{-1} \in C(n)$ with
$$f(\tau', \pi') = [\phi \tau \phi^{-1}, \phi \pi \phi^{-1}] = \phi [\tau, \pi] \phi^{-1} = \phi \rho \phi^{-1} = \rho',$$

and so $\rho' \in \text{image}(f).$\\

The above observation shows that it suffices to find examples of permutations $\tau, \pi \in C(n)$ whose commutator is contained in $C(k_1, ... , k_m)$ for every even conjugacy class $C(k_1, ... , k_m).$ If $[\tau, \pi] \in C(k_1, ... , k_m),$ then by simultaneously conjugating $\tau$ and $\pi$ we can assume that $\tau = \sigma_n,$ where $\sigma_n$ is the shift permutation. Hence, in order to prove the main theorem we want to prove the following reformulation of it:\\

\textbf{Claim}:
For every $n \ge 6$ and every even conjugacy class $C(k_1, ... , k_m) \subset S_n,$ there exists a $\pi \in C(n)$ with $[\sigma_n, \pi] \in C(k_1, ... , k_m).$\\

To prove the claim, we construct explicit examples in certain conjugacy classes which we will call \textit{irreducible} and show that it suffices to construct these examples. The latter follows from the following lemmata.\\ 

In the proofs, for two permutations $\alpha$ and $\beta$ on different letters, we use the notation $(\alpha) \, (\beta)$ to indicate the permutation that does both at the same time. For example, if $\alpha = (1 \, 2 \, 3)$ and $\beta = (4 \, 5) \, (6),$ then $(\alpha) \, (\beta) = (1 \, 2 \, 3) \, (4 \, 5) \, (6).$ 

\begin{lemma}\label{adding 1}
    Let $n \ge 2$ and $\pi \in C(n)$ with $\pi(1) = 2$ such that $[\sigma_n, \pi] \in C(k_1, ... , k_m).$ Then there exists $\pi_+ \in C(n+1)$ with $\pi_+(1) = 2$ such that 
    $$[\sigma_{n+1}, \pi_+] \in C(k_1, ... , k_m, 1).$$
\end{lemma}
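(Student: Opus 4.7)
My plan is to build $\pi_+$ by appending the new symbol $n+1$ to the cycle notation of $\pi$. Since $\pi \in C(n)$ satisfies $\pi(1)=2$, I can write $\pi = (1\,2\,a_3\,\ldots\,a_n)$ for some enumeration $a_3,\ldots,a_n$ of $\{3,\ldots,n\}$, and I would define
$$\pi_+ := (1\,2\,a_3\,\ldots\,a_n\,n+1) \in C(n+1).$$
Then $\pi_+(1)=2$ by construction, so it remains to verify that $[\sigma_{n+1},\pi_+]$ fixes $n+1$ and restricts to $[\sigma_n,\pi]$ on $\{1,\ldots,n\}$; this will give it cycle type $(k_1,\ldots,k_m,1)$ as required.

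The fixed-point check at $n+1$ is immediate: $\pi_+(n+1)=1$, $\sigma_{n+1}(1)=2$, $\pi_+^{-1}(2)=1$, $\sigma_{n+1}^{-1}(1)=n+1$. For the agreement on $\{1,\ldots,n\}$ I would rely on three elementary observations: (i) $\pi_+$ agrees with $\pi$ on $\{1,\ldots,n\}\setminus\{a_n\}$ while $\pi_+(a_n)=n+1$; (ii) $\pi_+^{-1}$ agrees with $\pi^{-1}$ on $\{2,\ldots,n\}$ while $\pi_+^{-1}(1)=n+1$; (iii) $\sigma_{n+1}$ agrees with $\sigma_n$ on $\{1,\ldots,n-1\}$, and $\sigma_{n+1}^{-1}$ agrees with $\sigma_n^{-1}$ on $\{2,\ldots,n\}$. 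I would then split into three cases for $x \in \{1,\ldots,n\}$: the value $x=a_n$ (where $\pi(x)=1$), the value $x=\pi^{-1}(n)$ (where $\pi(x)=n$), and the generic case $\pi(x)\in\{2,\ldots,n-1\}$. In the generic case the chain $x \mapsto \pi_+(x) \mapsto \sigma_{n+1}\pi_+(x) \mapsto \cdots$ never visits $n+1$, and every application of $\sigma_{n+1}^{\pm1}$ coincides with $\sigma_n^{\pm1}$, so $[\sigma_{n+1},\pi_+](x)=[\sigma_n,\pi](x)$ tautologically. In each of the other two cases the chain makes a single ``excursion'' through $n+1$, but by (i)--(iii) it is forced to return to the same terminal value produced by $[\sigma_n,\pi]$.

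The main obstacle is really just careful bookkeeping: one must ensure no intermediate value lands on a bad spot ($n$ under $\sigma$, or $1$ under $\sigma^{-1}$) in a way that makes $\sigma_{n+1}^{\pm 1}$ genuinely diverge from $\sigma_n^{\pm 1}$. The assumption $\pi(1)=2$ pins down $\pi^{-1}(1)=a_n$ and $\pi(2)=a_3$, and this is precisely what confines the exceptional behavior to the two special values $x=a_n$ and $x=\pi^{-1}(n)$, reducing the verification to a finite and self-checking case analysis.
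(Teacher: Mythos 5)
Your construction of $\pi_+$ (appending $n+1$ at the end of the cycle word for $\pi$) is exactly the one used in the paper, and your case split — the fixed point $n+1$, the generic values, and the two exceptional values $x=a_n$ (the paper's $j$) and $x=\pi^{-1}(n)$ — matches the paper's verification precisely. The proof is correct and takes essentially the same approach.
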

\begin{proof}
Since $\pi$ is an $n$-cycle with $\pi(1) = 2,$ we can write $\pi = (1 \ 2 \ i \ ... \ j)$ where $i, j \in \{3, ... , n\}$ and between $i$ and $j$ we could have any order of the remaining numbers.\\

Define $\pi_+ := (1 \ \ 2 \ \ i \ \ ... \ \ j \ \ n+1) \in C(n+1)$ where between $i$ and $j,$ $\pi_+$ is the same as $\pi.$\\

Let $\alpha := [\sigma_n, \pi] \in C(k_1, ... , k_m).$ We claim that $[\sigma_{n+1}, \pi_+] = (\alpha) \, (n+1) \in C(k_1, ... , k_m, 1).$ To see this, first note that 
$$\sigma_{n+1}^{-1}\pi_+^{-1}\sigma_{n+1}\pi_+(n+1) = \sigma_{n+1}^{-1}\pi_+^{-1}\sigma_{n+1}(1) = \sigma_{n+1}^{-1}\pi_+^{-1}(2) = \sigma_{n+1}^{-1}(1) = n+1.$$

For any $k \in \{1, ... , n\}$ we need to check that $[\sigma_n, \pi](k) = [\sigma_{n+1}, \pi_+](k).$ This is straightforward for all $k$ apart from two cases, namely $k = j$ and $k$ with $\pi(k) = n.$\\

For $k = j$ we obtain:
$$[\sigma_{n+1}, \pi_+](k) = \sigma_{n+1}^{-1}\pi_+^{-1}\sigma_{n+1}(n+1) = \sigma_{n+1}^{-1}\pi_+^{-1}(1) = \sigma_{n+1}^{-1}(n+1) = n,$$
$$[\sigma_n, \pi](k) = \sigma_n^{-1}\pi^{-1}\sigma_n(1) = \sigma_n^{-1}\pi^{-1}(2) = \sigma_n^{-1}(1) = n.$$\\

For $k$ with $\pi(k) = n$ we obtain:
$$[\sigma_{n+1}, \pi_+](k) = \sigma_{n+1}^{-1}\pi_+^{-1}\sigma_{n+1}(n) = \sigma_{n+1}^{-1}\pi_+^{-1}(n+1) = \sigma_{n+1}^{-1}(j) = j-1,$$
$$[\sigma_n, \pi](k) = \sigma_n^{-1}\pi^{-1}\sigma_n(n) = \sigma_n^{-1}\pi^{-1}(1) = \sigma_n^{-1}(j) = j-1.$$
\end{proof}

\begin{lemma}\label{stitching together}
    Let $n_1 , n_2 \ge 2$ and $n := n_1 + n_2.$ Let $\pi_1 \in C(n_1), \, \pi_2 \in C(n_2)$ with $\pi_1(1) = \pi_2(1) = 2$ such that $[\sigma_{n_1}, \pi_1] \in C(k_1, ... , k_m)$ and $[\sigma_{n_2}, \pi_2] \in C(l_1, ... , l_s).$ Then there exists $\pi \in C(n)$ with $\pi(1) = 2$ such that 
    $$[\sigma_n, \pi] \in C(k_1, ... , k_m, l_1, ... , l_s).$$
\end{lemma}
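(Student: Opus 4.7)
The plan is to concatenate $\pi_1$ and a translated copy of $\pi_2$ into a single $n$-cycle, mimicking the role played by $\pi_+$ in Lemma \ref{adding 1}. Write $\pi_1 = (1\ 2\ a_3\ \dots\ a_{n_1})$ and $\pi_2 = (1\ 2\ b_3\ \dots\ b_{n_2})$. I would set
\[
\pi := (1\ 2\ a_3\ \dots\ a_{n_1}\ \ n_1{+}1\ \ n_1{+}2\ \ n_1{+}b_3\ \dots\ n_1{+}b_{n_2}),
\]
which is an $n$-cycle satisfying $\pi(1)=2$, cyclically visiting the letters of $\pi_1$ first and then the letters of $\pi_2$ translated by $n_1$.

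Let $\alpha_1 := [\sigma_{n_1}, \pi_1]$ and let $\tilde{\alpha}_2$ denote $[\sigma_{n_2}, \pi_2]$ with each letter $x$ relabeled as $x+n_1$, so $\tilde{\alpha}_2$ has the same cycle type as $\alpha_2$ but acts on $\{n_1{+}1, \dots, n\}$. The central claim I would establish is
\[
[\sigma_n, \pi] \;=\; (\alpha_1)\,(\tilde{\alpha}_2),
\]
that is, the commutator splits as a product of two disjoint permutations, one supported on each block. Since $\alpha_1 \in C(k_1,\dots,k_m)$ and $\tilde{\alpha}_2$ has cycle type $(l_1,\dots,l_s)$, this forces $[\sigma_n,\pi] \in C(k_1, \dots, k_m, l_1, \dots, l_s)$, as required.

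To verify this, I would check for each $k \in \{1,\dots,n\}$ that $[\sigma_n,\pi](k)$ equals $\alpha_1(k)$ when $k\le n_1$, and equals $n_1 + \alpha_2(k-n_1)$ when $k>n_1$. On each block, $\pi$ restricts to $\pi_i$ and $\sigma_n$ restricts to $\sigma_{n_i}$ (up to translation), so agreement is automatic away from the seam. The problematic $k$ are exactly those for which one of the intermediate values $\pi(k)$, $\sigma_n\pi(k)$, $\pi^{-1}\sigma_n\pi(k)$ lands in the four-letter region $\{1, n_1, n_1{+}1, n\}$, where either $\pi$ or $\sigma_n$ disagrees with its local counterpart. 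This gives a short list of boundary inputs to be computed directly, entirely analogous in spirit to the two cases handled in the proof of Lemma \ref{adding 1}.

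The main obstacle will be the bookkeeping at the seam: the defect of $\pi$ versus $\pi_i$ and the defect of $\sigma_n$ versus $\sigma_{n_i}$ both live in that same four-letter region, and one must confirm that as the commutator traces $k \mapsto \pi(k) \mapsto \sigma_n\pi(k) \mapsto \pi^{-1}\sigma_n\pi(k) \mapsto [\sigma_n,\pi](k)$, every defect introduced by the splice is cancelled by a second defect further along the composition, so that no spurious cycles linking the two blocks appear. Once these cancellations are verified case-by-case, the block-diagonal form of $[\sigma_n,\pi]$ is immediate and the lemma follows.
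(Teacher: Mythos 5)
Your construction is exactly the one in the paper: splice $\pi_1$ with the $n_1$-translate of $\pi_2$ into a single $n$-cycle, show the commutator is the disjoint product $(\alpha_1)(\tilde{\alpha}_2)$, and check the few seam cases ($k=j_1$, the $k$ with $\pi_1(k)=n_1$, and their analogues in the second block), which are precisely the cases the paper computes. The approach is the same and correct; all that remains is to carry out those four routine boundary computations explicitly.
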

\begin{proof}
Analogously to the proof of the previous lemma, we write $\pi_1 = (1 \ 2 \ i_1 \ ... \ j_1),$ $\pi_2 = (1 \ 2 \ i_2 \ ... \ j_2),$ where $i_1, j_1 \in \{3, ... , n_1\}$ and $i_2, j_2 \in \{3, ... , n_2\}.$\\ 

Let $\alpha := [\sigma_{n_1}, \pi_1] \in C(k_1, ... , k_m), \ \beta := [\sigma_{n_2}, \pi_2] \in C(l_1, ... , l_s)$ and define 
$$\pi := (1 \ \ 2 \ \ i_1 \ \ ... \ \ j_1 \ \ n_1+1 \ \ n_1+2 \ \ n_1+i_2 \ \ ... \ \ n_1+j_2).$$

Here, between $i_1$ and $j_1,$ $\pi$ is the same as $\pi_1$ and between $n_1+i_2$ and $n_1+j_2,$ $\pi$ is the same as $\pi_2$ after adding $n_1$ to each element. By definition $\pi \in C(n)$ and we want to show that $[\sigma_n, \pi] = (\alpha)(\beta + n_1) \in C(k_1, ... , k_m, l_1, ... , l_s),$ where the notation $\beta+n_1$ means to add $n_1$ to each element of $\beta.$\\

In particular, we want to show that $[\sigma_n, \pi](k) = [\sigma_{n_1}, \pi_1](k)$ for every $k \in \{1, ... , n_1\}$ and $[\sigma_n, \pi](l+n_1) = [\sigma_{n_2}, \pi_2](l)+n_1$ for every $l \in \{1, ... , n_2\}.$ This is straighforward for all $k$ and $l$ apart from two cases respectively, namely $k = j_1$ and $k$ with $\pi_1(k) = n_1,$ as well as $l = j_2$ and $l$ with $\pi_2(l) = n_2.$ For these cases, we carry out the computations:\\

For $k = j_1$ we obtain:
$$[\sigma_n, \pi](k) = \sigma_n^{-1}\pi^{-1}\sigma_n(n_1+1) = \sigma_n^{-1}\pi^{-1}(n_1+2) = \sigma_n^{-1}(n_1+1) = n_1,$$
$$[\sigma_{n_1}, \pi_1](k) = \sigma_{n_1}^{-1}\pi_1^{-1}\sigma_{n_1}(1) = \sigma_{n_1}^{-1}\pi_1^{-1}(2) = \sigma_{n_1}^{-1}(1) = n_1.$$\\

For $k$ with $\pi_1(k) = n_1$ we obtain:
$$[\sigma_n, \pi](k) = \sigma_n^{-1}\pi^{-1}\sigma_n(n_1) = \sigma_n^{-1}\pi^{-1}(n_1+1) = \sigma_n^{-1}(j_1) = j_1-1,$$
$$[\sigma_{n_1}, \pi_1](k) = \sigma_{n_1}^{-1}\pi_1^{-1}\sigma_{n_1}(n_1) = \sigma_{n_1}^{-1}\pi_1^{-1}(1) = \sigma_{n_1}^{-1}(j_1) = j_1-1.$$\\

For $l = j_2$ we obtain:
$$[\sigma_n, \pi](l+n_1) = \sigma_n^{-1}\pi^{-1}\sigma_n(1) = \sigma_n^{-1}\pi^{-1}(2) = \sigma_n^{-1}(1) = n = n_2+n_1,$$
$$[\sigma_{n_2}, \pi_2](l) = \sigma_{n_2}^{-1}\pi_2^{-1}\sigma_{n_2}(1) = \sigma_{n_2}^{-1}\pi_2^{-1}(2) = \sigma_{n_2}^{-1}(1) = n_2.$$\\

For $l$ with $\pi_2(l) = n_2$ we obtain:
$$[\sigma_n, \pi](l+n_1) = \sigma_n^{-1}\pi^{-1}\sigma_n(n) = \sigma_n^{-1}\pi^{-1}(1) = \sigma_n^{-1}(j_2+n_1) = j_2-1 + n_1,$$
$$[\sigma_{n_2}, \pi_2](l) = \sigma_{n_2}^{-1}\pi_2^{-1}\sigma_{n_2}(n_2) = \sigma_{n_1}^{-1}\pi_1^{-1}(1) = \sigma_{n_1}^{-1}(j_2) = j_2-1.$$
\end{proof}

In regard of the above lemmata, given an even conjugacy class $C(k_1, ... , k_m),$ we want to split the tuple $(k_1, ... , k_m)$ into smaller pieces for which we can construct the desired permutations for the corresponding conjugacy classes. A straightforward idea would be to split it into odd singletons and pairs of even numbers, construct permutations for each corresponding conjugacy class and use Lemma \ref{stitching together} to obtain a permutation for the whole conjugacy class. More precisely, this means that we would only need to construct permutations $\pi$ whose commutator with $\sigma_n$ is contained in $C(n)$ for $n$ odd or $C(2k, 2l)$ for $k,l \ge 1.$ However, this does not work, since for $C(3)$ or $C(2,2)$ there does not exist such a desired permutation. Hence, we need to consider more conjugacy classes. The following list will be referred to as \textit{irreducible conjugacy classes}:

$$\begin{array}{cc}
    C(n) & n \ge 5 \text{ odd} \\[0.5ex]
    C(2k, 2l) & k, l \ge 1, \text{ except } k = l = 1 \\[0.5ex]
    C(n, 3) & n \ge 1 \text{ odd} \\[0.5ex]
    C(2k, 2l, 3) & k, l \ge 1 \\[0.5ex]
    C(n, 2, 2) & n \ge 3 \text{ odd} \\[0.5ex]
    C(2k, 2, 2, 2) & k \ge 1 \\[0.5ex]
\end{array}$$

We show that it suffices to consider the above list. The reader might want to compare this to the corresponding list of strata in (\cite{LJ}, Section 4).

\begin{lemma}\label{irreducible suffice}
    Let $n \ge 6$ and let $C(k_1, ... , k_m) \subset S_n$ be an even conjugacy class. Assume that for every irreducible conjugacy class $C$ there exists a $\pi_C$ with $\pi_C(1) = 2$ such that its commutator with the corresponding shift permutation lies in $C$. Then there exists $\pi \in C(n)$ with $[\sigma_n, \pi] \in C(k_1, ... , k_m).$
\end{lemma}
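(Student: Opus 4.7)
The plan is to decompose the cycle type $(k_1, \ldots, k_m)$ into blocks, each one of the listed irreducible types, then apply Lemma~\ref{stitching together} to assemble the blocks and Lemma~\ref{adding 1} to attach any fixed points. First I would strip off the parts equal to $1$: it suffices to construct $\pi' \in C(n')$ with $\pi'(1) = 2$ and $[\sigma_{n'}, \pi'] \in C(l_1, \ldots, l_p)$, where $(l_1, \ldots, l_p)$ are the parts $\ge 2$ and $n' = l_1 + \cdots + l_p$; the missing $n - n'$ fixed points are then added one at a time via Lemma~\ref{adding 1}.

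The combinatorial heart of the proof is a case analysis on the counts $t$ of parts equal to $3$ and $s$ of parts equal to $2$ among the $l_j$. Odd parts $\ge 5$ each form a singleton block $C(n)$, and pairs of even parts $\ge 4$ give blocks $C(2k, 2l)$ with $k, l \ge 2$. Parts equal to $3$ are paired up as $C(3, 3)$, which is the $n=3$ case of $C(n, 3)$; a leftover single $3$ is absorbed into a $C(n, 3)$, $C(2k, 2l, 3)$, or $C(1, 3)$ block according to what other parts (or spare fixed points) are available. Parts equal to $2$ are grouped in fours as $C(2, 2, 2, 2)$; a leftover pair of $2$s is absorbed into $C(n, 2, 2)$ together with an odd neighbour $\ge 3$, or into $C(2k, 2, 2, 2)$ together with a larger even neighbour. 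The parity constraint that the total number of even parts is even guarantees the groupings line up.

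With the partition in hand I would invoke Lemma~\ref{stitching together} iteratively to weave the blocks into a single $\pi' \in C(n')$ with $\pi'(1) = 2$; the normalisation $\pi(1) = 2$ is preserved by both lemmata and so propagates through the iteration, after which Lemma~\ref{adding 1} finishes the job by adding the fixed points.

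The hard part is the corner case in which the non-$1$ parts are exactly $(2, 2)$, i.e.\ the cycle type is $(2, 2, 1^{n-4})$, because the irreducibles $C(n, 2, 2)$ and $C(2k, 2, 2, 2)$ both require an extra non-$2$ part not available here, and the degenerate ``block'' $C(2, 2)$ itself is excluded from the irreducible list (precisely because it fails on $n = 4$). This must be dealt with by an explicit computation: one exhibits a $\pi \in C(6)$ with $\pi(1) = 2$ and $[\sigma_6, \pi] \in C(2, 2, 1, 1)$, after which Lemma~\ref{adding 1} propagates the construction to every $n \ge 6$. This is the supplementary input alluded to in the paper's opening remarks as ``short computation and induction argument''.
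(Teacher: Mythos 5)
Your overall strategy --- decompose the cycle type into blocks from the irreducible list, stitch them with Lemma~\ref{stitching together}, and append fixed points with Lemma~\ref{adding 1} --- is exactly the paper's, and you correctly spot that $(2,2,1^{n-4})$ needs an extra explicit seed $C(2,2,1,1)$. But your decomposition scheme has a genuine gap: $(2,2,1^{n-4})$ is far from the only type that cannot be partitioned into irreducible blocks. Concretely, $(3,3,3)$ on $9$ letters leaves a lone $3$ after pairing, with no other odd part, no pair of even parts, and no fixed point to absorb it; $(2^6)$ on $12$ letters leaves a pair of $2$'s after grouping in fours, with no odd part $\ge 3$ and no even part $\ge 4$ available (and $C(2,2)$ is excluded from the list); and the mixed types $(3,3,2,2)$ and $(3,2,2,2,2)$ fail because the leftover $3$ and the leftover pair of $2$'s compete for the same absorbing partners --- every way of consuming the $2$'s strands a $3$, and vice versa. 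Each of these demands its own explicit seed permutation, which the paper supplies: $(1\,2\,4\,9\,5\,7\,8\,6\,3)$ for $C(3,3,3)$, $(1\,2\,8\,12\,9\,6\,7\,3\,5\,10\,11\,4)$ for $C(2,2,2,2,2,2)$, $(1\,2\,4\,6\,7\,9\,10\,5\,8\,3)$ for $C(3,3,2,2)$ and $(1\,2\,7\,5\,3\,11\,10\,4\,6\,9\,8)$ for $C(3,2,2,2,2)$. With these, any number of $3$'s at least two, any even number of $2$'s at least four, and all the mixed $2$--$3$ configurations become reachable, and the rest of your argument goes through. The parity of the number of even parts does not by itself make "the groupings line up"; it only pairs the even parts, not the counts of $2$'s modulo $4$ or of $3$'s modulo $2$.

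There is also an arithmetic slip in your fallback for a leftover pair of $2$'s: a block $C(2k,2,2,2)$ with $k \ge 2$ consumes three $2$'s, not two, so it cannot absorb a pair, and subtracting three from a count that is $\equiv 2 \pmod 4$ does not restore divisibility by four. When the leftover pair of $2$'s has only even parts $\ge 4$ for company, the correct move is to observe that the parity constraint forces at least two such parts $2k, 2l$ to be present and to split off the two blocks $C(2k,2)$ and $C(2l,2)$, each consuming a single $2$; this is precisely how the paper handles its "exactly two $2$'s" subcase.
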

\begin{proof}
    In this proof whenever we say \textit{a permutation for the conjugacy class} $C(l_1, ... , l_s),$ we mean an $m$-cycle $\tau,$ where $m := l_1+...+l_s,$ with the properties $\tau(1) = 2$ and $[\sigma_m, \tau] \in C(l_1, ... , l_s).$ Furthermore, we say that $C(l_1, ... , l_s)$ \textit{contains a number} $l$ if $l_j = l$ for some $j = 1, ... , s.$\\

    Our goal is to show that there is a $\pi$ for $C(k_1, ... , k_m).$ If $C(k_1, ... , k_m)$ does not contain the numbers $1, 2$ and $3,$ then by the assumption we can find permutations for the irreducible conjugacy classes $C(k_i)$ with $k_i$ odd and $C(k_j, k_{j'})$ with $k_j, k_{j'}$ even. Then, by Lemma \ref{stitching together} we can also find a $\pi$ for $C(k_1, ... , k_m).$\\

    So, we need to consider the cases where $C(k_1, ... , k_m)$ contains $1, 2$ or $3.$ First, assume that $C(k_1, ... , k_m)$ contains $1,$ but doesn't contain $2$ or $3.$ Let $k_i = 1$ for some $i.$ Then, if we can find a permutation for $C(k_1, ... , k_{i-1}, k_{i+1}, ... , k_m),$ by Lemma $\ref{adding 1}$ we can also find one for $C(k_1, ... , k_m).$ Inductively by this process we can remove all numbers equal to $1$ and be left with the previous case, so again we can find a $\pi$ for $C(k_1, ..., k_m).$\\ 

    Now, assume that $C(k_1, ... , k_m)$ contains $3,$ but doesn't contain $1$ or $2.$ Consider the permutations $(1 \ 2 \ 4 \ 3 \ 6 \ 5)$ and $(1 \ 2 \ 4 \ 9 \ 5 \ 7 \ 8 \ 6 \ 3).$ Their commutator with the corresponding shift permutations is contained in $C(3,3)$ and $C(3,3,3)$ respectively. So, by Lemma \ref{stitching together}, we can find permutations for any conjugacy class of the form $C(3, ... , 3),$ that is the conjugacy class containing only and at least two $3$'s. Therefore, if $C(k_1, ... , k_m)$ contains more than one $3,$ we can find a permutation for the conjugacy class $C(3,...,3)$ with the number of $3$'s that $C(k_1, ... , k_m)$ contains and using Lemma \ref{stitching together} reduce to the case of having no $3$'s. If $C(k_1, ... , k_m)$ contains exactly one $3,$ then it contains at least either one more odd number $k_i$ or a pair of even numbers $k_j, k_{j'}$. From the assumption, we can build a permutation for the irreducible conjugacy classes $C(k_i, 3)$ or $C(k_j, k_{j'}, 3)$ and by Lemma \ref{stitching together} reduce to the case of no $3$'s again.\\ 

    Now, assume that $C(k_1, ... , k_m)$ contains $2,$ but doesn't contain $1$ or $3.$ Consider the permutations $(1 \ 2 \ 5 \ 4 \ 7 \ 8 \ 6 \ 3)$ and $(1 \ 2 \ 8 \ 12 \ 9 \ 6 \ 7 \ 3 \ 5 \ 10 \ 11 \ 4).$ Their commutator with the corresponding shift permutations is contained in $C(2,2,2,2)$ and $C(2,2,2,2,2,2)$ respectively. So, by Lemma \ref{stitching together}, we can find permutations for any conjugacy class of the form $C(2, ... , 2),$ that is the conjugacy class containing only and at least two pairs of $2$'s. If $C(k_1, ... , k_m)$ contains an even number of $2$'s which is greater or equal to four, we can find a permutation for the conjugacy class $C(2, ... , 2)$ with the number of $2$'s that $C(k_1, ... , k_m)$ contains and using Lemma \ref{stitching together} reduce to the case of having no $2$'s. If $C(k_1, ... , k_m)$ contains an odd number of $2$'s which is greater or equal to five, then there is at least one more even number $k_j$ and we can find permutations for the conjugacy classes $C(2, ... , 2)$ where the number of $2$'s is one less than the number of $2$'s contained in $C(k_1, ... , k_m)$ and for $C(2, k_j)$ since it is irreducible. Again using Lemma \ref{stitching together} we can reduce to the case of having no $2$'s. If $C(k_1, ... , k_m)$ contains exactly one or exactly three $2$'s, then there is at least one other even number $k_j$ and by the assumption we can find permutations for $C(2, k_j)$ or $C(2, 2, 2, k_j)$ and hence reduce to having no $2$'s. If $C(k_1, ... , k_m)$ contains exactly two $2$'s, then there is either another odd number $k_i$ or another pair of even numbers $k_j, k_{j'}.$ Note that $k_j \neq 2 \neq k_{j'}.$ By the assumpion we can find permutations for the irreducible classes $C(k_i, 2, 2)$ or $C(k_j, 2)$ and $C(k_{j'}, 2).$ We conclude that also in this case we can reduce to having no $2$'s.\\

    Now, assume that $C(k_1, ... , k_m)$ contains $1$ and $3,$ but not $2.$ Consider the permutation $(1 \ 2 \ 4 \ 3).$ Its commutator with $\sigma_4$ lies in $C(3,1).$ Let $k$ be the minimum of the number of $1$'s and the number of $3$'s contained in $C(k_1, ... , k_m).$ By Lemma \ref{stitching together}, we can find a permutation in the conjugacy class $C(3, ...  , 3, 1, ..., 1)$ with exactly $k$ $3$'s and $k$ $1$'s and reduce to the case of having either only $1$'s or only $3$'s left.\\

    Now, assume that $C(k_1, ... , k_m)$ contains $1$ and $2,$ but not $3.$ Consider the permutation $(1 \ 2 \ 6 \ 4 \ 5 \ 3).$ Its commutator with $\sigma_6$ lies in $C(2, 2, 1, 1).$ If $C(k_1, ... , k_m)$ contains numbers not equal to $1$ or $2,$ by the previous cases we can find a permutation for the conjugacy class containing those numbers and all the $2$'s. Then by Lemma \ref{adding 1}, we can also find a permutation for $C(k_1, ... , k_m).$ Otherwise, $C(k_1, ... , k_m)$ consists only of $1$'s and $2$'s. If there are more than two $2$'s we already saw that we can find a permutation for $C(2,...,2)$ and then add the $1$'s by Lemma \ref{adding 1}. If there are exactly two $2$'s, then there have to be at least two $1$'s since $n \ge 6$. In that case, we use the permutation for $C(2,2,1,1)$ constructed above and add the remaining $1$'s with Lemma \ref{adding 1}.\\

    Now, assume that $C(k_1, ... , k_m)$ contains $2$ and $3,$ but not $1.$ 
    
    Consider the permutations $(1 \ 2 \ 4 \ 6 \ 7 \ 5 \ 3),$ $(1 \ 2 \ 4 \ 6 \ 7 \ 9 \ 10 \ 5 \ 8 \ 3)$ and $(1 \ 2 \ 7 \ 5 \ 3 \ 11 \ 10 \ 4 \ 6 \ 9 \ 8).$ Their commutator with the corresponding shift permutations is contained in $C(3,2,2), \, C(3,3,2,2)$ and $C(3,2,2,2,2)$ respectively. Using these permutations, together with the ones for the conjugacy classes $C(2, ... , 2), \, C(3, ... ,3)$ and Lemma \ref{stitching together}, we can construct permutations for every conjugacy class of the form $C(2, ... , 2, 3, ... ,3)$ for any even number of $2$'s and any number of $3$'s. Hence, if $C(k_1, ... , k_m)$ contains an even number of $2$'s, we built the permutation for the corresponding conjugacy class $C(2, ... ,2,3,...,3)$ and by Lemma \ref{stitching together} are able to reduce to the case without $2$'s or $3$'s. If $C(k_1, ... , k_m)$ contains an odd number of $2$'s, then there is at least one more even number $k_j \neq 2,$ so we use permutations for $C(2, ... , 2, 3, ... , 3)$ where the number of $2$'s is one less than the number of $2$'s contained in $C(k_1, ... , k_m)$ and for $C(2, k_j)$ in order to reduce to the case without $2$'s and $3$'s.\\

    Finally, assume that $C(k_1, ... , k_m)$ contains $1, 2$ and $3.$ Without loss of generality we can assume that $k_{m'} = k_{m'+1} = ... = k_m = 1$ for some $m'$ i.e. $C(k_1, ... , k_m) = C(k_1, ... , k_{m'-1}, 1, ... ,1).$ By the above discussion, we can construct a permutation for $C(k_1, ... , k_{m'-1}).$ Hence, Lemma \ref{adding 1} yields a permutation $\pi$ for $C(k_1, ... , k_m).$   
\end{proof}

With the above Lemma \ref{irreducible suffice} in hand, it remains to show that every irreducible conjugacy class is contained in the image of the commutator map. For that, we explicitely construct permutations $\pi$ whose commutator with the shift permutation lies in the given irreducible conjugacy class. The only constraint on $\pi$ we need to fulfil is that $\pi(1) = 2.$

\subsection*{Permutations for the irreducible conjugacy classes}

For each irreducible conjugacy class below, we define a permutation $\pi \in C(n)$ for the corresponding $n$ such that $[\sigma_n, \pi]$ is contained in that conjugacy class. All permutations we define satisfy $\pi(1) = 2,$ so together with the above lemmata this section concludes the proof of Theorem \ref{commutator theorem}.\\

In the following, whenever we write $\overset{+d}{\dots}$ or $\overset{-d}{\dots}$ for some $d \in \mathbb{N}$ in a cycle of a permutation, we mean to continue the cycle by adding or subtracting $d$ consecutively until reaching the next number in the permutation. For example $(1 \ 2 \ \overset{+2}{\dots} \ 8 \ 7) = (1 \ 2 \ 4 \ 6 \ 8 \ 7).$ Whenever we write $\overset{(+d, -d')}{\dots}$ for some $d,d' \in \mathbb{N},$ we mean to continue by adding $d,$ then subtracting $d'$ and so on until reaching the next number. For example $(1 \ 2 \ \overset{(+3, -1)}{\dots} \ 9) = (1 \ 2 \ 5 \ 4 \ 7 \ 6 \ 9).$\\ 

\fbox{$C(n)$ for $n \ge 5$ odd}\\

\hspace{0.5cm} Let $\pi := (1 \ \ 2 \ \ 4 \ \ 5 \ \ \overset{+2}{\dots} \ \ n \ \ n-1 \ \ \overset{-2}{\dots} \ \ 6 \ \ 3).$ We compute:
$$[\sigma_n, \pi] = \begin{cases}
    (1 \ \ 5 \ \ \overset{+4}{\dots} \ \ n-4 \ \ n-1 \ \ \overset{-4}{\dots} \ \ 4 \ \ 7 \ \ \overset{+4}{\dots} \ \ n-2 \ \ 2 \ \ 3 \ \ n \ \ n-3 \ \ \overset{-4}{\dots} \ \ 6), & \text{if } n \equiv 1 \text{ mod } 4,\\
    (1 \ \ 5 \ \ \overset{+4}{\dots} \ \ n-2 \ \ 2 \ \ 3 \ \ n \ \ n-3 \ \ \overset{-4}{\dots} \ \ 4 \ \ 7 \ \ \overset{+4}{\dots} \ \ n-4 \ \ n-1 \ \ \overset{-4}{\dots} \ \ 6), & \text{if } n \equiv 3 \text{ mod } 4.
\end{cases}$$

In any case, we have $[\sigma_n, \pi] \in C(n).$\\

\fbox{$C(2k, 2l)$ for $k , l \ge 1$ except $k = l = 1$}\\

We subdivide this case into $k = l, k = l+1$ and $k > l + 1.$\\

For $k = l$ (and $l \ge 4$), let 
$$\pi = (1 \ \ 2 \ \ 5 \ \ 3 \ \ 4 \ \ \overset{(+3,-1)}{\dots} \ \ 2l+2 \ \  2l+6 \ \ 2l+4 \ \ 2l+5 \ \ \overset{(+3,-1)}{\dots} \ \ 4l \ \ 4l-1).$$

We compute:

$$[\sigma_{4l}, \pi] = \begin{cases}
    (1 \ \ 4 \ \ \overset{+4}{\dots} \ \ 2l \ \ 2l+5 \ \ \overset{+4}{\dots} \ \ 4l-3 \ \ 4l-2 \ \ \overset{-4}{\dots} \ \ 2l+6 \ \ 2l+3 \ \ \overset{-4}{\dots} \ \ 3)\\
    (2 \ \ \overset{+4}{\dots} \ \ 2l+2 \ \ 2l+7 \ \ \overset{+4}{\dots} \ \ 4l-1 \ \ 4l \ \ \overset{-4}{\dots} \ \ 2l+4 \ \ 2l+1 \ \ \overset{-4}{\dots} \ \ 5), & \text{if } l \equiv 0 \text{ mod } 2,\\ \\
    (1 \ \ 4 \ \ \overset{+4}{\dots} \ \ 2l+2 \ \ 2l+7 \ \ \overset{+4}{\dots} \ \ 4l-3 \ \ 4l-2 \ \ \overset{-4}{\dots} \ \ 2l+4 \ \ 2l+1 \ \ \overset{-4}{\dots} \ \ 3)\\
    (2 \ \ \overset{+4}{\dots} \ \ 2l \ \ 2l+5 \ \ \overset{+4}{\dots} \ \ 4l-1 \ \ 4l \ \ \overset{-4}{\dots} \ \ 2l+6 \ \ 2l+3 \ \ \overset{-4}{\dots} \ \ 5), & \text{if } l \equiv 1 \text{ mod } 2.
\end{cases}$$

Counting the elements in the two cycles of $[\sigma_{4l}, \pi]$ shows that in both cases $[\sigma_{4l}, \pi] \in C(2l,2l) = C(2k,2l).$ For $l \in \{2, 3\},$ the respective permutations $(1 \ 2 \ 4 \ 5 \ 8 \ 6 \ 7 \ 3)$ and $(1 \ 2 \ 5 \ 3 \ 4 \ 7 \ 6 \ 9 \ 8 \ 12 \ 10 \ 11)$ give the desired results $C(4,4)$ and $C(6,6)$.\\

For $k = l+1$ (and $l \ge 4$) let 
$$\pi = (1 \ \ 2 \ \ 5 \ \ 3 \ \ 4 \ \ \overset{(+3,-1)}{\dots} \ \ 2l+2 \ \ 2l+6 \ \ 2l+4 \ \ 2l+5 \ \ \overset{(+3,-1)}{\dots} \ \ 4l \ \ 4l-1 \ \ 4l+2 \ \ 4l+1).$$

Since this permutation is obtained by the one in the $k=l$ case by adding $4l+2, 4l+1$ at the end, which doesn't change the $(+3,-1)$ pattern, a similar computation to the one before shows $[\sigma_{4l+2}, \pi] \in C(2l + 2, 2l) = C(2k, 2l).$ For $l \in \{1, 2, 3\},$ the respective permutations $(1 \ 2 \ 5 \ 4 \ 6 \ 3) ,(1 \ 2 \ 5 \ 3 \ 4 \ 7 \ 6 \ 10 \ 8 \ 9)$ and $(1 \ 2 \ 5 \ 3 \ 4 \ 7 \ 6 \ 9 \ 8 \ 12 \ 10 \ 11 \ 14 \ 13)$ give the desired results $C(4,2), C(6,4)$ and $C(8,6).$\\

For $k > l+1$ (and $l \ge 2$), let 
$$\pi = (1 \ \ 2 \ \ 5 \ \ 3 \ \ 4 \ \ \overset{(+3,-1)}{\dots} \ \ 4l-4 \ \ 4l \ \ 4l-2 \ \ 4l-1 \ \ 4l+1 \ \ 4l+4 \ \ 4l+2 \ \ 4l+3 \ \ \overset{(+3,-1)}{\dots} \ \ 2k+2l \ \ 2k+2l-1).$$

We compute:

$$[\sigma_{2k+2l}, \pi] = \begin{cases}
    (1 \ \ 4 \ \ \overset{+4}{\dots} \ \ 4l-4 \ \ 4l-2 \ \ 4l-5 \ \ \overset{-4}{\dots} \ \ 3)\\
    (2 \ \ \overset{+4}{\dots} \ \ 4l-6 \ \ 4l-1 \ \ \overset{+4}{\dots} \ \ 2k+2l-1 \ \ 2k+2l \ \ \overset{-4}{\dots} \ \ 4l+4 \\ 4l+1 \ \ \overset{+4}{\dots} \ \ 2k+2l-3 \ \ 2k+2l-2 \ \ \overset{-4}{\dots} \ \ 4l+2 \ \ 4l \ \ 4l-3 \ \ \overset{-4}{\dots} \ \ 5), & \text{if } k+l \equiv 0 \text{ mod } 2, \\ \\
    (1 \ \ 4 \ \ \overset{+4}{\dots} \ \ 4l-4 \ \ 4l-2 \ \ 4l-5 \ \ \overset{-4}{\dots} \ \ 3)\\
    (2 \ \ \overset{+4}{\dots} \ \ 4l-6 \ \ 4l-1 \ \ \overset{+4}{\dots} \ \ 2k+2l-3 \ \ 2k+2l-2 \ \ \overset{-4}{\dots} \ \ 4l+4 \\ 4l+1 \ \ \overset{+4}{\dots} \ \ 2k+2l-1 \ \ 2k+2l \ \ \overset{-4}{\dots} \ \ 4l+2 \ \ 4l \ \ 4l-3 \ \ \overset{-4}{\dots} \ \ 5), & \text{if } k+l \equiv 1 \text{ mod } 2.
\end{cases}$$

Again counting the elements in the two cycles of $[\sigma_{2k+2l}, \pi]$ shows that in both cases $[\sigma_{2k+2l}, \pi] \in C(2k,2l).$ The remaining case is $k > l+1$ and $l=1.$ If we let $n := 2k+2,$ this corresponds to $C(n-2,2)$ for $n \ge 8.$ For this (if $n \ge 10$), let 
$$\pi = (1 \ \ 2 \ \ 4 \ \ 5 \ \ 8 \ \ 7 \ \ 10 \ \ \overset{+2}{\dots} \ \ n \ \ n-1 \ \ \overset{-2}{\dots} \ \ 9 \ \ 6 \ \ 3),$$

and compute:
$$[\sigma_n, \pi] = \begin{cases}
    (4 \ 8) \\ (1 \ \ 5 \ \ 10 \ \ \overset{+4}{\dots} \ \ n-2 \ \ 2 \ \ 3 \ \ n \ \ n-3 \ \ \overset{-4}{\dots} \ \ 9 \ \ 7 \ \ 12 \ \ \overset{+4}{\dots} \ \ n-4 \ \ n-1 \ \ \overset{-4}{\dots} \ \ 11 \ \ 6), & \text{if } n \equiv 0 \text{ mod } 4,\\ \\
    (4 \ 8) \\ (1 \ \ 5 \ \ 10 \ \ \overset{+4}{\dots} \ \ n-4 \ \ n-1 \ \ \overset{-4}{\dots} \ \ 9 \ \ 7 \ \ 12 \ \ \overset{+4}{\dots} \ \ n-2 \ \ 2 \ \ 3 \ \ n \ \ n-3 \ \ \overset{-4}{\dots} \ \ 11 \ \ 6), & \text{if } n \equiv 2 \text{ mod } 4.
\end{cases}$$

Hence, $[\sigma_n, \pi] \in C(n-2,2).$ Finally, for $n = 8$ the permutation $(1 \ 2 \ 4 \ 6 \ 8 \ 7 \ 5 \ 3)$ gives the desired conjugacy class $C(6,2).$\\

\fbox{$C(n, 3)$ for $n \ge 1$ odd}\\

\hspace{0.5cm} For $n = 1,$ $\pi = (1 \ 2 \ 4 \ 3)$ and for $n = 3,$ $\pi = (1 \ 2 \ 4 \ 3 \ 6 \ 5)$ give the desired result. For $n \ge 5,$ let $\pi = (1 \ \ 2 \ \ 4 \ \ 5 \ \ 7 \ \ 8 \ \ \overset{+2}{\dots} \ \ n + 3 \ \ n + 2 \ \ \overset{-2}{\dots} \ \ 9 \ 6 \ 3).$ We compute:

$$[\sigma_{n+3}, \pi] =  \begin{cases}
    (1 \ 5 \ 6) \, (2 \ \ 3 \ \ n+3 \ \ n \ \ \overset{-4}{\dots} \ \ 9 \ \ 4 \ \ \overset{+4}{\dots} \ \ n-1 \ \ n+2 \ \ \overset{-4}{\dots} \ \ 7 \ \ 10 \ \ \overset{+4}{\dots} \ \ n+1) & \text{if } n \equiv 1 \text{ mod } 4,\\
    (1 \ 5 \ 6) \, (2 \ \ 3 \ \ n+3 \ \ n \ \ \overset{-4}{\dots} \ \ 7 \ \ 10 \ \ \overset{+4}{\dots} \ \ n-1 \ \ n+2 \ \ \overset{-4}{\dots} \ \ 9 \ \ 4 \ \ \overset{+4}{\dots} n+1) & \text{if } n \equiv 3 \text{ mod } 4.
\end{cases}$$

In any case, we have $[\sigma_{n+3}, \pi] \in C(n,3).$\\

\fbox{$C(2k, 2l, 3)$ for $k, l \ge 1$}\\

\hspace{0.5cm} Let $n := 2k+2l$ and $\pi \in C(n)$ be any of the permutations constructed in the $C(2k, 2l)$ case above such that $[\sigma_n, \pi] \in C(2k,2l).$ Then $\pi$ is of the from $(1 \ 2 \ ... \ i \ n \ ... \ j),$ where $i, j \in \{3, ... , n-1\}.$ Define $\tau \in C(n+5)$ by 
$$\tau := (1 \ \ 2 \ \ ... \ \ i \ \ n \ \ ... \ \ j \ \ n+1 \ \ n+3 \ \ n+2 \ \ n+5 \ \ n+4),$$

where $\tau$ is the same as $\pi$ from $1$ to $j.$ Note that whatever $\pi$ is, the commutator $[\sigma_{n+5}, \tau]$ contains the cycle $(n+1 \ \ n+4 \ \ n+5).$ For any $m \in \{1, ... , n\} \setminus \{i, j\}$ it is straightforward that $[\sigma_{n+5}, \tau](m) = [\sigma_{n}, \pi](m).$ Furthermore, we also have $[\sigma_{n+5}, \tau](i) = j-1 = [\sigma_{n}, \pi](i).$ For $j$ we have $[\sigma_{n}, \pi](j) = n,$ whereas the commutator $[\sigma_{n+5}, \tau]$ contains a cycle of the form $(... \ \ j \ \ n+2 \ \ n+3 \ \ n \ \ ...).$ Since $[\sigma_{n}, \pi] \in C(2k, 2l)$ and one can check that the element $n$ is contained in the $2k$ cycle of $[\sigma_{n}, \pi]$ for any permutation $\pi$ constructed above, we conclude that $[\sigma_{n+5}, \tau] \in C(2k+2, 2l, 3).$\\ 

Using the permutations for the $C(2k,2l)$ case above and the method just described, we cover every case except for $C(4, 2, 3)$ and $C(2k, 2k, 3)$ for $k \ge 1.$ The permutation $(1 \ 2 \ 4 \ 5 \ 8 \ 7 \ 9 \ 6 \ 3)$ gives the desired result for $C(4, 2, 3).$ For $C(2k, 2k , 3)$ (and $k \ge 4$) consider 
$$\pi = (1 \ \ 2 \ \ 6 \ \ 3 \ \ 4 \ \ 10 \ \ 5 \ \ 8 \ \ 9 \ \ 7 \ \ 12 \ \ \overset{(-1, +3)}{\dots} \ \ 4k-3 \ \ 4k-1 \ \ 4k+1 \ \ 4k \ \ 4k+3 \ \ 4k+2).$$  

We compute

\begin{align*}
    [\sigma_{4k+3}, \pi] = (1 \ \ 5 \ \ 7 \ \ 13 \ \ \overset{+4}{\dots} \ \ 4k-3 \ \ 4k \ \ 4k+1 \ \ 4k-2 \ \ \overset{-4}{\dots} \ \ 10)\\
    (2 \ \ 8 \ \ 3 \ \ 9 \ \ 4 \ \ 11 \ \ \overset{+4}{\dots} \ \ 4k-5 \ \ 4k-4 \ \ \overset{-4}{\dots} \ \ 12 \ \ 6)\\
    (4k-1 \ \ 4k+2 \ \ 4k+3)
\end{align*}

and see that $[\sigma_{4k+3}, \pi] \in C(2k, 2k, 3).$\\

Finally, the permutations $(1 \ 2 \ 4 \ 6 \ 7 \ 5 \ 3), \, (1 \ 2 \ 4 \ 6 \ 5 \ 3 \ 7 \ 9 \ 8 \ 11 \ 10)$ and $(1 \ 2 \ 6 \ 3 \ 4 \ 10 \ 5 \ 8 \ 9 \ 7 \ 11 \ 13 \ 12 \ 15 \ 14)$ give the desired result for the conjugacy classes $C(2, 2, 3), C(4, 4, 3)$ and $C(6, 6, 3)$ respectively.\\

\fbox{$C(n, 2, 2)$ for $n \ge 3$ odd}\\

\hspace{0.5cm} For $n =3,$ $\pi = (1 \ 2 \ 4 \ 6 \ 7 \ 5 \ 3)$ gives the desired result. For $n \ge 5,$ we let $\pi = (1 \ \ 2 \ \ 4 \ \ n+2 \ \ n+4 \ \ n+3 \ \ 5 \ \ 6 \ \ \overset{+2}{\dots} \ \ n+1 \ \ n \ \ \overset{-2}{\dots} \ \ 7 \ \ 3).$ We compute:
$$[\sigma_{n+4}, \pi] = \begin{cases}
    (2 \ n+2) \, (4 \ n+3) \\ (1 \ \ 6 \ \ \overset{+4}{\dots} \ \ n-3 \ \ n \ \ \overset{-4}{\dots} \ \ 5 \ \ 8 \ \ \overset{+4}{\dots} \ \ n-1 \ \ 3 \ \ n+4 \ \ n+1 \ \ n-2 \ \ \overset{-4}{\dots} \ \ 7) & \text{if } n \equiv 1 \text{ mod } 4,\\ \\
    (2 \ n+2) \, (4 \ n+3) \\ (1 \ \ 6 \ \ \overset{+4}{\dots} \ \ n-1 \ \ 3 \ \ n+4 \ \ n+1 \ \ n-2 \ \ \overset{-4}{\dots} \ \ 5 \ \ 8 \ \ \overset{+4}{\dots} \ \ n-3 \ \ n \ \ \overset{-4}{\dots} \ \ 7) & \text{if } n \equiv 3 \text{ mod } 4.
\end{cases}$$

In any case, we have $[\sigma_{n+4}, \pi] \in C(n, 2, 2).$\\

\fbox{$C(2k, 2, 2, 2)$ for $k \ge 1$}\\

\hspace{0.5cm} For $k = 1,$ $\pi = ( 1 \ 2 \ 5 \ 4 \ 7 \ 8 \ 6 \ 3)$ and for $k = 2,$ $\pi = (1 \ 2 \ 4 \ 8 \ 9 \ 7 \ 5 \ 6 \ 10 \ 3)$ give the desired result. For $k \ge 3,$ we distinguish whether $k \equiv 0 \ (\text{mod } 2)$ or $k \equiv 1 \ (\text{mod } 2).$ In the first case, let $\pi = (1 \ \ 2 \ \ 5 \ \ 4 \ \ 7 \ \ 9 \ \ 10 \ \ \overset{+2}{\dots} \ \ n-2 \ \ n-3 \ \ \overset{-2}{\dots} \ \ 11 \ \ 8 \ \ n-1 \ \ n \ \ 6 \ \ 3).$ In the second case, let $\pi = (1 \ \ 2 \ \ 5 \ \ 4 \ \ 7 \ \ \overset{+2}{\dots} \ \ n-3 \ \ n-2 \ \ \overset{-2}{\dots} \ \ 8 \ \ n-1 \ \ n \ \ 6 \ \ 3).$ Here, $n := 2k + 6.$ We compute:

$$[\sigma_n, \pi] = \begin{cases}
    (1 \ 5) \, (2 \ n-1) \, (3 \ n) \\ (4 \ \ 10 \ \ \overset{+4}{\dots} \ \ n-4 \ \ 7 \ \ 8 \ \ n-2 \ \ n-5 \ \ \overset{-4}{\dots} \ \ 9 \ \ 12 \ \ \overset{+4}{\dots} \ \ n-6 \ \ n-3 \ \ \overset{-4}{\dots} \ \ 11 \ \ 6) & \text{if } k \equiv 0 \text{ mod } 2,\\ \\
    (1 \ 5) \, (2 \ n-1) \, (3 \ n) \\ (4 \ \ 9 \ \ \overset{+4}{\dots} \ \ n-3 \ \ 7 \ \ \overset{+4}{\dots} \ \ n-5 \ \ n-4 \ \ \overset{-4}{\dots} \ \ 8 \ \ n-2 \ \ \overset{-4}{\dots} \ \ 6) & \text{if } k \equiv 1 \text{ mod } 2.
\end{cases}$$

In any case, we have $[\sigma_n, \pi] \in C(2k, 2, 2, 2).$\\

\printbibliography

\end{document}